\documentclass[10pt, reqno]{amsart}

\usepackage{amsmath,amssymb,amsthm}

\usepackage{tikz}

\usepackage{caption}
\usepackage{graphicx}
\usepackage{graphics}
\usepackage{algorithm}
\usepackage{algorithmicx}
\usepackage{algpseudocode}

\newtheorem*{thm}{Theorem}
\newtheorem{lemma}{Lemma}

\begin{document}

\title[]{A set of points on the sphere \\with small Riesz energy}

\author[]{Stefan Steinerberger}

 \address{Department of Mathematics and Department of Applied Mathematics, University of Washington, Seattle}
 \email{steinerb@uw.edu}

\begin{abstract} 
We construct a set of points $\left\{x_1, \dots, x_n\right\} \subset \mathbb{S}^2$ such that
$$ \sum_{i \neq j} \frac{1}{\|x_i - x_j\|^2} \leq \frac{n^2 \log{n}}{4} + cn^2 + \mathcal{O}(n^{11/6} \log{n}),$$
where the constant $c \sim -0.085768\dots$ is given in closed form and matches the constant that was 
conjectured by Brauchart-Hardin-Saff to be optimal. The point set is motivated by the crystallization conjecture
and consists of pieces of the hexagonal lattice projected onto the sphere in a tightly interlocked way.
\end{abstract}

\maketitle

\section{Introduction and Results}
We study the minimal Riesz energy of $n$ points on the unit sphere $\mathbb{S}^2$. This problem is classical, we refer to the book of Borodachov--Hardin--Saff \cite{boro}. One particularly natural case is the interaction energy $1/\|x-y\|^2$ which is known to be critical, the strength of the singularity matches the dimension leading to a logarithmic correction.
Brauchart--Hardin--Saff \cite[Conjecture 5]{bhs} conjecture the asymptotic minimal energy to scale as
$$ \min_{x_1, \dots, x_n \in \mathbb{S}^2} \quad  \sum_{i \neq j} \frac{1}{\|x_i - x_j\|^2} =  \frac{n^2 \log{n}}{4} + c_* n^2 + \mathcal{O}(1),$$
where $c_* = -0.0857684\dots$ has a closed form expression involving a coefficient arising in the Laurent series expansion of the Hurwitz zeta function. The bound
$$ \min_{x_1, \dots, x_n \in \mathbb{S}^2} \quad  \sum_{i \neq j} \frac{1}{\|x_i - x_j\|^2} \leq  \frac{n^2 \log{n}}{4} + c n^2 + o(n^2)$$
has been established for
\begin{align*}
c &= \gamma/4  \sim0.144\dots \qquad && \text{(\mbox{Alishahi--Zamani } \cite{ali})}\\
c &= \frac{1}{4} \left(\frac{3}{2} - \log(2\pi) + \gamma\right)  \sim0.0598\dots \qquad && \text{(\mbox{de la Torre--Marzo } \cite{de})} \\
c &= \frac{1}{2}\left(\gamma - \frac{1}{2} - \frac{\log{2}}{3} \right)  \sim-0.0769 \dots \qquad && \text{(\mbox{L\'opez-G\'omez } \cite{lo}),}
\end{align*}
where $\gamma$ is the Euler-Mascheroni constant.
All these bounds are constructive:
Alishahi and Zamani use a determinantal point process (the spherical ensemble); de la Torre and Marzo adapt an approach pioneered by Armentano--Beltran--Shub \cite{arm} and use the roots of random elliptic polynomials. The recent result of L\'opez-G\'omez, which very nearly matches the conjectured asymptotic rate, uses the diamond ensemble of Beltran--Etayo \cite{beltran}. It is widely believed that the constant conjectured by Brauchart--Hardin--Saff is indeed the correct one and this is, in a philosophical sense, implied by the crystallization conjecture \cite{crys}. Unconditionally, the best lower bound is due to Brauchart \cite{brauchart} who used an ingenious idea (see also Bilyk--Brauchart \cite{brauchart2}) to show
$$   \min_{x_1, \dots, x_n \in \mathbb{S}^2} \quad  \sum_{i \neq j} \frac{1}{\|x_i - x_j\|^2} \geq  \frac{n^2 \log{n}}{4} +  \frac{\gamma - 1}{4} n^2 + o(n^2),$$
where
$(\gamma - 1)/4 \sim - 0.10569 \dots$
nearly matches the conjectured constant. We construct an explicit set of points whose $2-$Riesz energy behaves as conjectured.

\begin{thm} There exists a set $\left\{x_1, \dots, x_n\right\} \subset \mathbb{S}^2$ such that
$$ \max_{1 \leq i \leq n} \sum_{j=1 \atop j \neq i}^n \frac{1}{\|x_i - x_j\|^2} \leq  \frac{n \log{n}}{4} +  \log\left(  \frac{(2\pi)^{3/4}}{3^{1/4}}  \frac{e^{\gamma/2}}{  \Gamma(1/3)^{3/2}}   \right)  n + \mathcal{O}(n^{5/6}\log{n}),$$
where $\gamma$ is the Euler-Mascheroni constant.
\end{thm}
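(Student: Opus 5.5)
We construct the point set from pieces of the hexagonal lattice projected onto the sphere, and then split each point's energy sum into a local and a global part. First, the sphere is decomposed into $m \sim n^{\alpha}$ patches of comparable size and bounded eccentricity; the icosahedron, refined as a geodesic triangulation, is a convenient choice. Inside each patch we place the radial projection of a piece of a scaled hexagonal lattice $\varepsilon\Lambda$, where $\Lambda$ is the triangular lattice with unit nearest-neighbour distance. The scale $\varepsilon$ is chosen so that the density is $n/(4\pi)$ per unit area, i.e.\ $\frac{\sqrt3}{2}\varepsilon^2 = 4\pi/n$. Along the patch boundaries the pieces are interlocked: we only require that every point has distance $\gtrsim \varepsilon$ to every other point, and that the number of points within distance $r$ of any boundary is $\lesssim n^{1/2} r/\varepsilon \cdot (\text{total boundary length})$. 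The projection of a flat patch of diameter $\delta$ distorts distances by a factor $1+\mathcal{O}(\delta^2)$, which we will trade off against the boundary cost.

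Fix $x_i$ and a radius $\rho$ with $\varepsilon \ll \rho \ll 1$. We split the sum over $j$ into a near field $\|x_i-x_j\|<\rho$ and a far field.

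For the far field, we compare the sum with $\frac{n}{4\pi}\int_{\|y-x_i\|\ge\rho} \|x_i-y\|^{-2}\,d\sigma(y)$. On the sphere, $\int \|x-y\|^{-2}\,d\sigma(y)$ over the region $\|x-y\|\ge\rho$ equals $\pi\log(4/\rho^2)$ exactly, since $d\sigma = \pi\, d(t^2)$ in the chordal distance $t$. The discrepancy between the sum and the integral is controlled cell by cell: each point represents a region of area $4\pi/n$ and diameter $\sim\varepsilon$, and the kernel varies on that scale by a relative amount $\varepsilon/t$. Using the local hexagonal symmetry, the first-order term cancels inside each lattice patch. The remaining error is of order $n\,\varepsilon^2\int_\rho t^{-4}\,t\,dt \sim n\varepsilon^2/\rho^2$, plus a boundary contribution from the interfaces between patches.

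For the near field, the points form, up to distortion $1+\mathcal{O}(\rho^2)$, a flat hexagonal lattice unless $x_i$ lies near a patch boundary. In the flat lattice we use the Epstein zeta asymptotics
\[
\sum_{0<|v|<R} |v|^{-2} = \frac{2\pi}{\sqrt3}\log R + C_\Lambda + o(1),
\]
where $C_\Lambda$ is expressed through the Laurent constant of $\zeta_\Lambda(s)$ at $s=1$. By the Chowla--Selberg / Hurwitz factorization $\zeta_\Lambda(s) = 6\,\zeta(s)\,L(s,\chi_{-3})$, this constant involves $\gamma$, $\log(2\pi)$, $\log 3$ and $\log\Gamma(1/3)$, which is exactly the source of the displayed constant. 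Rescaling by $\varepsilon$ and adding $\frac{n}{4}\log(4/\rho^2)$ from the far field, the $\log\rho$ terms cancel. This leaves $\frac{n}{4}\log n$ plus $n$ times the claimed logarithm; the bookkeeping of $\frac{2\pi}{\sqrt3}\varepsilon^{-2} = n/(4\pi)\cdot 4\pi \cdot$ (normalizations) must be checked carefully.

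The errors are:
\begin{itemize}
\item the projection distortion, which costs $n\rho^2$ in the near field;
\item the far-field discretization, which costs $n\varepsilon^2/\rho^2 \sim 1/\rho^2$;
\item points near patch boundaries.
\end{itemize}
The boundary points are the main obstacle, because the statement is a maximum over $i$, so even boundary points must satisfy the bound. For such $x_i$, the near field is a union of two or three misaligned lattice pieces, and the sum is not given by the Epstein constant. I would try to show that an interlocked configuration, obtained by matching rows of the two lattices along a common geodesic edge with a shift, changes the local sum by only $\mathcal{O}(\log n)$ relative to the perfect lattice. The heuristic is that the missing and extra points lie in a strip of width $\sim\varepsilon$, and their contribution $\sum t^{-2}$ over a strip is logarithmic, $\int_{\varepsilon}^{\rho} dt/(\varepsilon t)\cdot\varepsilon^2\cdot n$. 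This must be scaled correctly, and if it yields $\mathcal{O}(n^{1/2})$-type errors, then optimizing the patch size $\delta$ against the distortion $n\delta^2$ and the boundary effects is what I expect produces the exponent $5/6$. For example, a distortion $n\delta^2$ balanced against a boundary contribution $n^{1/2}\delta^{-1}\log n$ gives $\delta = n^{-1/6}$ and an error of $n^{5/6}\log n$.
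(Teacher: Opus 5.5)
There is a genuine gap, and it is the step you defer yourself: the points at patch interfaces. Your heuristic for them is off by a power of $n$. Suppose $x_i$ lies on a strip of misfit points of width $\sim\varepsilon$. There are $\sim dt/\varepsilon$ strip points at distance in $[t,t+dt]$, each contributing $t^{-2}$. The integrand is therefore $dt/(\varepsilon t^2)$, not $dt/(\varepsilon t)$, and the total is $\asymp\varepsilon^{-2}\asymp n$, dominated by the few nearest neighbours. More generally, a strip at distance $t$ from $x_i$ contributes $\asymp 1/(\varepsilon t)$ to its sum. So the interface perturbs the sum of an interface point at exactly the order of the constant you are trying to identify. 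Since the theorem bounds the \emph{maximum} over $i$, nothing can be averaged away. Nothing in your setup controls the sign. A single neighbour at distance $c\varepsilon$ with $c<1$, which your requirement ``distance $\gtrsim\varepsilon$'' allows, already adds $(c^{-2}-1)\varepsilon^{-2}\asymp n$. Even with separation exactly $\varepsilon$, you give no argument that a grain-boundary point between two rotated lattices has local sum at most the perfect-lattice value. The interface term in your far field and the requirement of exactly $n$ points are also left open.

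The paper's fix is to \emph{not} interlock tightly. In every patch (the projected cube-face squares of side $\sim n^{-1/3}$) it deletes all points within $100n^{-1/2}$ of the patch boundary. Points of different patches are then dozens of lattice spacings apart, and the minimal separation keeps the sharp constant, $(8\pi/\sqrt3)^{1/2}n^{-1/2}-\mathcal{O}(n^{-2/3})$. The near-field radius is taken as $\rho=n^{-1/10}\gg n^{-1/3}$, and the paper never uses that the near field is a single lattice. Projecting to the tangent plane at $x_i$ only shrinks distances (mutual ones by at most a factor $1-\mathcal{O}(\rho^2)$). For \emph{every} $i$ the near-field sum is then bounded by the full hexagonal lattice sum at the minimal spacing, since a point next to a gap only loses neighbours. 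The price is the empty area fraction $n^{2/3}\cdot n^{-1/3}\cdot n^{-1/2}=n^{-1/6}$. The paper pays it by running the construction with $n+2cn^{5/6}$ and discarding surplus points, which only lowers energies. This shrinks the spacing by a factor $1-\mathcal{O}(n^{-1/6})$, and that factor acting on $\frac n4\log n$ is exactly the $\mathcal{O}(n^{5/6}\log n)$ error. (Your balance of $n\delta^2$ against $n^{1/2}\delta^{-1}\log n$ at $\delta=n^{-1/6}$ would give $n^{2/3}\log n$, not $n^{5/6}\log n$.)

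Two smaller points. First, for unit nearest-neighbour spacing the lattice sum grows like $\frac{4\pi}{\sqrt3}\log R$ (density $2/\sqrt3$ times $2\pi\log R$), not $\frac{2\pi}{\sqrt3}\log R$. You need the former for the $\log\rho$ terms to cancel against the far field's $-\frac n2\log\rho$. Second, your route to the constant via $\zeta_\Lambda(s)=6\zeta(s)L(s,\chi_{-3})$ is a legitimate alternative to the paper's Kronecker limit formula plus Chowla--Selberg, but you still have to carry it out.
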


 The constant
 $$  \log\left(  \frac{(2\pi)^{3/4}}{3^{1/4}}  \frac{e^{\gamma/2}}{  \Gamma(1/3)^{3/2}}   \right)  = -0.085768410300902483655821715047\dots$$
 is the same as the one obtained by Brauchart--Hardin--Saff: we use the first Kronecker limit formula to arrive at an expression involving the Dedekind eta function which is then evaluated using the Chowla-Selberg formula. One could simply refer to the abstract quantities arising in the Laurent series around the pole at $s=1$, that would lead to the alternative expression. 
 \begin{center}
\begin{figure}[h!]
\begin{tikzpicture}
 \node at (6,0) {\includegraphics[width=0.46\textwidth]{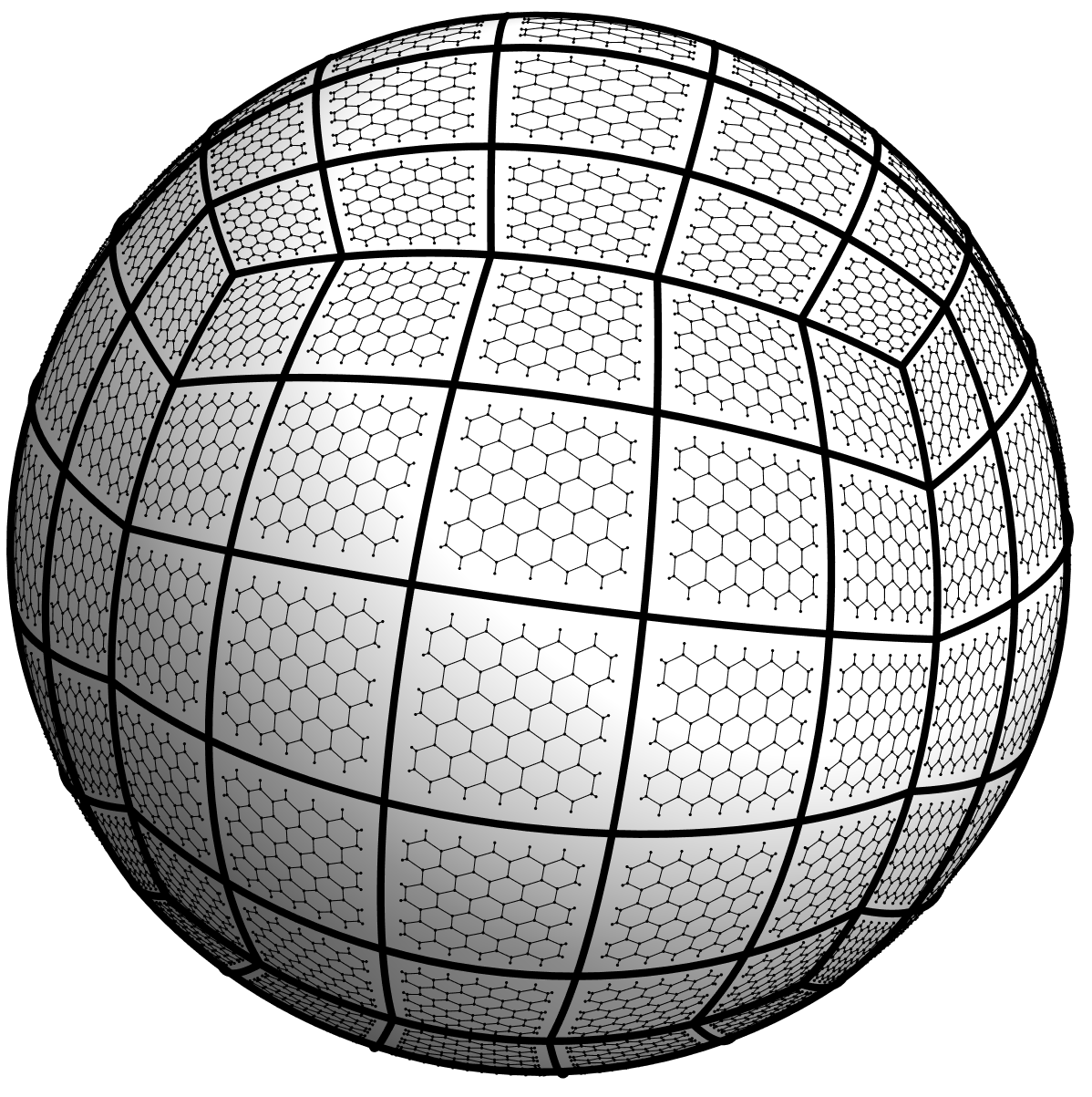}};
\end{tikzpicture}
\caption{Sketch of the set: $\mathbb{S}^2$ is partitioned into similarly sized convex cells, each containing the projection of a hexagonal lattice.}
\end{figure}
\end{center}

We did not optimize the proof to minimize the error term $\mathcal{O}(n^{5/6} \log{n})$. Some improvement is possible (and sketched in the proof) but in its present form there is no reason to assume that this construction would lead to a good term past the second order without a refined analysis of the boundary of the quadrilaterals.\newpage

\textbf{Idea behind the construction.} The idea behind the construction is fairly simple.
\begin{enumerate}
\item The main term $n^2 \log{n}/4$ is only a little bit larger than the interesting scale $n^2$. The leading order term is easy to predict, one has a lot of wiggle room (and, indeed, the error term $\mathcal{O}(n^{5/6} \log{n})$ is quite large).
\item The crystallization conjecture suggests that optimal configurations will be `mostly' locally hexagonal. The significant decay of $\|x-y\|^{-2}$ suggests that interactions across large distances should be easier to control. This suggests placing locally hexagonal structures and gluing them together.
\item This strategy has already been successfully implemented by  Habicht and van der Waerden \cite{habicht}  in 1951, they proved the existence of a set of points $\left\{x_1, \dots, x_n\right\} \subset \mathbb{S}^2$ such that
$$ \min_{i \neq j} \|x_i - x_j\| \geq   \left( \frac{8 \pi}{\sqrt{3}} \right)^{1/2} \frac{1}{\sqrt{n}} - \mathcal{O}\left(\frac{1}{n^{2/3}}\right).$$
\end{enumerate}

It is tempting to believe that any set of points that is nearly maximally separated in the sense of Habicht and van der Waerden should
have to be locally hexagonal in most places and should then be very nearly optimal for all Riesz energies; this conjecture is arguably out of
reach, however, it does suggest adapting the construction of Habicht and van der Waerden to find sets of points with small energy. Our construction gets the correct leading two orders for $s=2$ and the leading order for $s = \infty$, it is reasonable to assume that it does well in the range $2 < s < \infty$.

\section{Description of the Points}
This section is heavily inspired by the construction proposed by Habicht and van der Waerden \cite{habicht} and describes how to derive the relevant half of their result from simple geometric considerations (they prove a lower bound on minimal separation; we describe their example showing that this rate can be attained; that the rate is best possible is related to sphere packing problem in $\mathbb{R}^2$ and not relevant here).\\

\textbf{Construction.} The first step is a decomposition of $\mathbb{S}^2$ into `local cells' as follows: first inscribe a standard cube
around $\mathbb{S}^2$ such that each of the six sides touches the sphere. We then partition each of the 6 sides into 
$\sim n^{2/3}$ squares (see Fig. 1).
\begin{center}
\begin{figure}[h!]
\begin{tikzpicture}
\node at (0,0) {\includegraphics[width=0.3\textwidth]{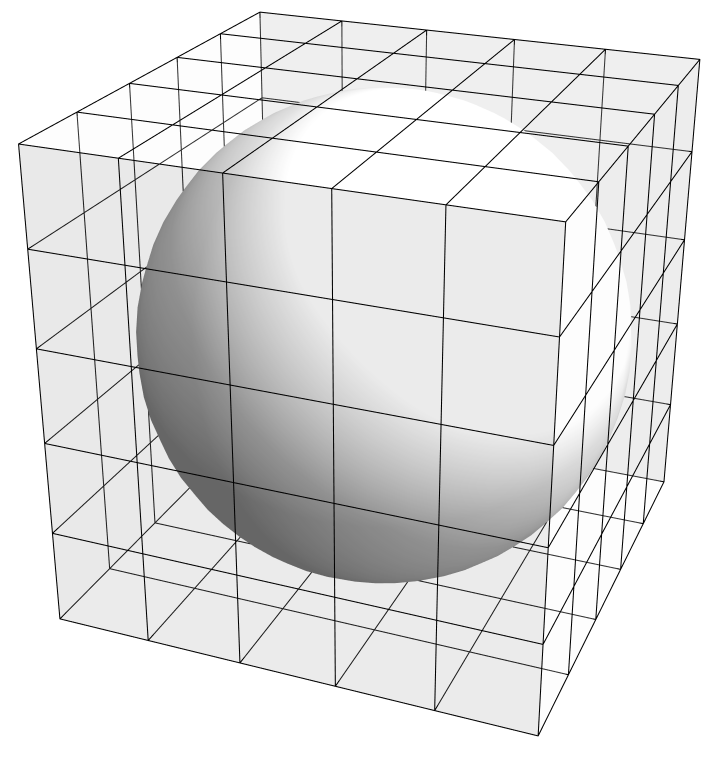}};
\draw[<->] (-2,0.2) -- (-2,0.6);
\node at (-2.8, 0.4) {$\sim n^{-1/3}$};
\node at (5.5,0) {\includegraphics[width=0.3\textwidth]{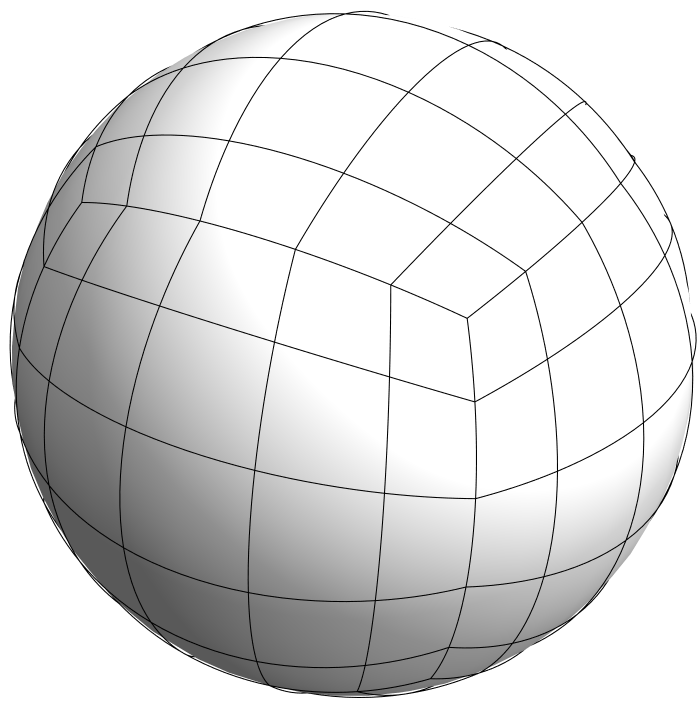}};
\end{tikzpicture}
\caption{The enclosing cube containing the squares (left) and the projection of these squares onto $\mathbb{S}^2$ (right)}
\end{figure}
\end{center}

We will use the convention of using precise constants when these are required and $\sim $ when the precise value of the leading order constant will not be important; thus, for example, decomposing each of the six squares into $\sim n^{2/3}$ smaller squares means that we can decompose them into $\left\lfloor n^{2/3} \right\rfloor$ or $\left\lfloor 2 n^{2/3} \right\rfloor$ squares or anything else at that scale.
 After that decomposition, we project each of these squares onto $\mathbb{S}^2$.  This gives a partition of $\mathbb{S}^2$ into $\sim n^{2/3}$ spherical quadrilaterals. We note that these quadrilaterals
do not have the same area (there is a distortion that is also clearly visible in Fig. 1), however, their areas are comparable up to universal constants. More precisely, we have a decomposition of $\mathbb{S}^2$ into $\sim n^{2/3}$ spherical quadrilaterals such that
 each quadrilateral has area $\sim n^{-2/3}$ and spatial dimensions $\sim n^{-1/3} \times n^{-1/3}$. For each spherical quadrilateral
 we remove a $100 \cdot n^{-1/2}$ neighborhood of its boundary: since $n^{-1/2} \ll n^{-1/3}$ this is a very small part of the quadrilateral, we
 may think of it as a tiny area around the boundary. 
 For each spherical quadrilateral $Q$, pick its barycenter $b \in Q$, construct the tangent plane to the sphere in $b$ and consider this tangent plane to be a copy of $\mathbb{R}^2$ with $b$ being the origin. We then consider the standard hexagonal grid $\Lambda$ in $\mathbb{R}^2$, rescaled by a factor of
$  \sqrt{ 8 \pi} \cdot 3^{-1/4} \cdot n^{-1/2}$
and project this hexagonal grid onto $Q$.  That factor is chosen so that the Voronoi cell of the rescaled hexagonal lattice, has area
$$ 2\sqrt{3} \left[ \frac{1}{2} \left( \frac{8 \pi}{\sqrt{3}} \right)^{1/2} \frac{1}{\sqrt{n}} \right]^2 = \frac{4\pi}{n}$$
which is aligned with $n$ `evenly distributed' points on $\mathbb{S}^2$ (having surface area $4\pi$).
We project these points onto the sphere and keep all the points that land inside $Q$ and have distance at least $100 \cdot n^{-1/2}$ from the boundary of $Q$ (see Fig. 3).

\begin{center}
\begin{figure}[h!]
\begin{tikzpicture}
\node at (0,0) {\includegraphics[width=0.8\textwidth]{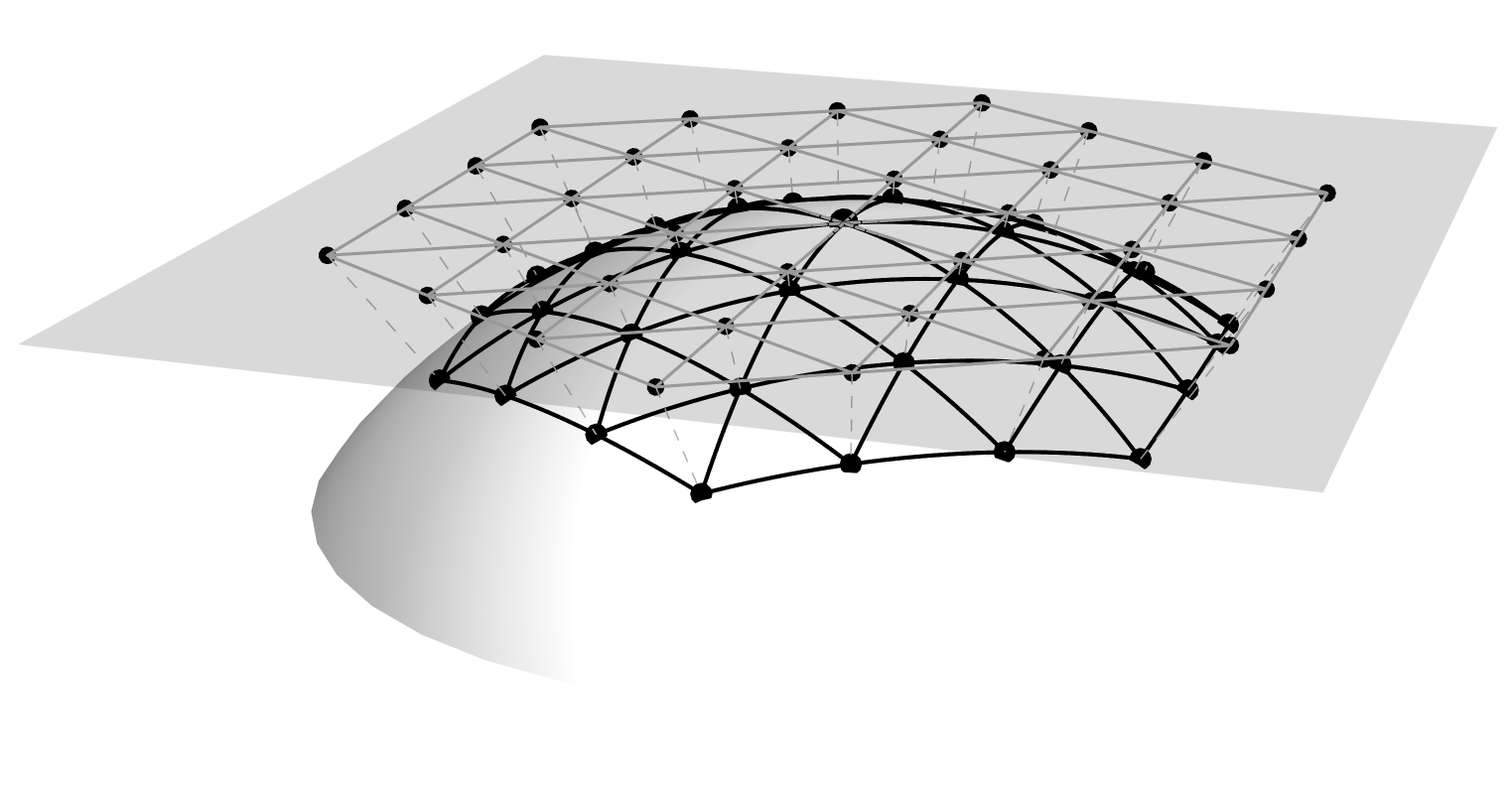}};
\end{tikzpicture}
\vspace{-20pt}
\caption{The hexagonal lattice in a tangent plane being projected onto the sphere; the point where the tangent plane touches $\mathbb{S}^2$ lies in a quadrilateral $Q$ and we keep all the projected points that are at least distance $100/\sqrt{n}$ from the boundary of $\partial Q$.}
\end{figure}
\end{center}

\textbf{Properties.} This leads to a set of approximately $n$ approximately hexagonal points on the unit sphere $\mathbb{S}^2$.  We make this precise by first estimating how many points in this construction will land in the spherical quadrilateral $Q$ which we assume to have surface area $|Q|$. Since $Q$ has spatial dimensions $n^{-1/3} \times n^{-1/3}$, the local area distortion when projecting onto the tangent plane $T$ is of order $1 + \mathcal{O}(n^{-2/3})$. Therefore the area of the projection $\pi(Q)$ of the quadrilateral from the sphere onto the tangent plane is 
$$ |\pi(Q)| = (1+ \mathcal{O}(n^{-2/3})) \cdot |Q| = |Q| + \mathcal{O}(n^{-4/3}).$$
It remains to determine the number of (rescaled) hexagonal points that are contained in $\pi(Q)$.  Since $\pi(Q)$ is convex and has diameter and inradius comparable up to a constant (it is `round' as opposed to `elongated'), we can do the usual elementary trick of bounding the number by perturbing the boundary. The boundary of $\pi(Q)$ has length $\sim n^{-1/3}$ and we are removing a $100\cdot n^{-1/2}$ neighborhood from it which leads to an area correction of size $\mathcal{O}(n^{-1/3} n^{-1/2}) = \mathcal{O}(n^{-5/6})$. 
The fundamental cell of the rescaled hexagonal lattice has area $4\pi/n$ and thus the number of points in the spherical quadrilateral $Q$ is
 \begin{align*}
 \frac{n}{4 \pi} |\pi(Q)| + \mathcal{O}(n^{-1/3} n^{-1/2} n) = \frac{n}{4\pi} |Q| + \mathcal{O}(n^{1/6}).
 \end{align*}
 There are $\sim n^{2/3}$ quadrilaterals whose areas sum up to $4\pi$ and thus we end up with a total of
 $n + \mathcal{O}(n^{5/6})$ points. It is worth mentioning, at this point, that the error term will have a negative sign: we will typically
 have $n - c \cdot n^{5/6}$ points at this stage of the construction; these  $n + \mathcal{O}(n^{5/6})$  points have large separation
 $$ \min_{i \neq j} \|x_i - x_j\| \geq   \left( \frac{8 \pi}{\sqrt{3}} \right)^{1/2} \frac{1}{\sqrt{n}} - \mathcal{O}(n^{-7/6}).$$
 The leading order term in this estimate is exactly the distance between two points from the lattice in the tangent plane; we do not need to concern ourselves with points from different `patches' since these are at least $100 \cdot n^{-1/2}$ separated which is much larger than the desired bound. For two points from the same patch, the effect of projecting two points at distance $\delta$ from part of a tangent plane with diameter $\sim n^{-1/3}$ leads to a distance distortion of no more than $\sim n^{-2/3}$. Since the distance cannot shrink more than by a factor of $(1- c \cdot n^{2/3})$ and the initial distance is $\sim n^{-1/2}$, we end up with an error of size no larger than $\mathcal{O}(n^{-1/2} n^{-2/3})= \mathcal{O}(n^{-7/6})$. However, we do not quite have $n$ points, we only have $n - c n^{5/6}$ points. We therefore have to rerun the argument with $n + 2 c n^{5/6}$ as the initial parameter and arrive at a set with at least $n$ points with minimal distance
$$ \left( \frac{8 \pi}{\sqrt{3}} \right)^{1/2} \frac{1}{\sqrt{n + 2 c n^{5/6}}}  - \mathcal{O}(n^{-7/6}) = \left( \frac{8 \pi}{\sqrt{3}} \right)^{1/2} \frac{1}{\sqrt{n}} - \mathcal{O}(n^{-2/3}).$$
 This is, somewhat rewritten, the argument of Habicht -- van der Waerden concerning the largest possible minimal distance for any set of $n$ points on $\mathbb{S}^2$.

\section{The Main Lemma} 
The purpose of this section is to provide the main lemma that describes the behavior of the Riesz 2-energy with respect to the hexagonal lattice $\Lambda$ in $\mathbb{R}^2$.  The statement can be read as having two parts: the first is that, for some $A \in \mathbb{R}$, we have the asymptotic expansion
$$  \sum_{k \in \Lambda \atop 0 < \|k\| < X} \frac{1}{\|k\|^2} = \frac{4\pi}{\sqrt{3}} \log{X} + A + \mathcal{O}(X^{-1}).$$
This is easy to establish via elementary techniques. The second part of the argument identifies the value of $A$ and relies on some ideas that are, while standard in the literature, decidedly not robust (the Kronecker Limit Formula for the asymptotic behavior of the Epstein zeta function near the critical singularity).

\begin{lemma} Let $\Lambda$ denote the standard hexagonal lattice in $\mathbb{R}^2$. Then
$$ \sum_{k \in \Lambda \atop 0 < \|k\| < X} \frac{1}{\|k\|^2} = \frac{4\pi}{\sqrt{3}} \log\left( \frac{e^{\gamma}}{\sqrt{3} |\eta(e^{\pi i/3})|^2}X \right) + \mathcal{O}(X^{-1}).$$
where $\eta$ is the Dedekind eta function.
\end{lemma}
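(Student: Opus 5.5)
The plan is to reduce the lemma to the Laurent expansion of the Epstein zeta function of $\Lambda$ at its pole $s=1$, and then evaluate the constant term with the first Kronecker limit formula. I normalize $\Lambda = \mathbb{Z} + \tau\mathbb{Z}$ with $\tau = e^{\pi i/3}$, so the minimal distance is $1$ and the covolume is $y := \operatorname{Im}\tau = \sqrt{3}/2$. This matches the coefficient $4\pi/\sqrt{3} = 2\pi/y$ in the statement. Set
$$ Z(s) = \sum_{k \in \Lambda\setminus\{0\}} \|k\|^{-2s}, \qquad c := \frac{\pi}{y} = \frac{2\pi}{\sqrt{3}}. $$

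\textbf{Step 1 (lattice point counting).} Let $N(r) = \#\{k \in \Lambda: 0<\|k\|<r\}$ and write $N(r) = c r^2 + E(r)$. The elementary bound $E(r) = \mathcal{O}(r)$ follows from covering the disk by fundamental cells and comparing areas in an $\mathcal{O}(1)$-neighborhood of the circle. Since $N(r)=0$ for $r<1$, we have $E(1^-) = -c$.

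\textbf{Step 2 (partial sum via Stieltjes integration).} Writing the partial sum as $\int_{1^-}^{X} r^{-2}\, dN(r)$ and integrating by parts gives
$$ \sum_{0<\|k\|<X} \frac{1}{\|k\|^2} = 2c\log X + c + 2\int_1^\infty \frac{E(r)}{r^3}\,dr + \mathcal{O}(X^{-1}). $$
Here the boundary term $E(X)/X^2$ and the tail $\int_X^\infty E(r) r^{-3}\,dr$ are both $\mathcal{O}(X^{-1})$ because $E(r)=\mathcal{O}(r)$. The same computation for $\operatorname{Re} s>1$ gives
$$ Z(s) = \frac{c}{s-1} + c + 2s\int_1^\infty \frac{E(r)}{r^{2s+1}}\,dr. $$
The last integral converges absolutely and uniformly near $s=1$. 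Hence $Z(s) = c/(s-1) + A + \mathcal{O}(s-1)$ with $A = c + 2\int_1^\infty E(r) r^{-3}\,dr$, which is exactly the constant appearing in the partial sum. So the partial sum equals $2c\log X + A + \mathcal{O}(X^{-1})$ with $A$ the constant Laurent coefficient of $Z$ at $s=1$. This also gives the "easy" first half of the lemma.

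\textbf{Step 3 (Kronecker limit formula).} Since $\|m+n\tau\|^{2s} = |m+n\tau|^{2s}$, the first Kronecker limit formula gives
$$ Z(s) = y^{-s}\sum_{(m,n)\neq 0}\frac{y^s}{|m+n\tau|^{2s}} = y^{-s}\Big[\frac{\pi}{s-1} + 2\pi\big(\gamma - \log 2 - \log(\sqrt{y}\,|\eta(\tau)|^2)\big) + \mathcal{O}(s-1)\Big]. $$
Expanding $y^{-s} = y^{-1}\big(1 - (s-1)\log y + \dots\big)$ yields the constant term
$$ A = \frac{2\pi}{y}\big(\gamma - \log 2 - \log y - \log|\eta(\tau)|^2\big). $$
With $y=\sqrt{3}/2$ we have $2\pi/y = 4\pi/\sqrt{3}$ and $\log 2 + \log y = \log\sqrt{3}$. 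Therefore $A = \frac{4\pi}{\sqrt{3}}\log\big(e^\gamma/(\sqrt{3}|\eta(e^{\pi i/3})|^2)\big)$. Combining this with $2c\log X = \frac{4\pi}{\sqrt{3}}\log X$ gives the lemma.

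The main obstacle is not analytic but bookkeeping. One has to keep the normalizations straight: which form of the Kronecker limit formula is used (the $y^s$ weighting, and $\log 2$ versus $\log(2\pi)$ conventions), and the extra $-\frac{\pi}{y}\log y$ that arises from the factor $y^{-s}$. I would double-check the form of the limit formula against the standard square-lattice case $\tau=i$ before trusting the final constant. Everything else is elementary, since the $\mathcal{O}(r)$ lattice-count error already gives the required $\mathcal{O}(X^{-1})$.
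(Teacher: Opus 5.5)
Your proposal is correct and follows essentially the same route as the paper. Both use the $\mathcal{O}(r)$ lattice count and Stieltjes integration by parts to identify the partial-sum constant with the constant Laurent coefficient of the Epstein zeta function at $s=1$, then evaluate that coefficient with the first Kronecker limit formula. The only cosmetic difference is that you factor out $y^{-s}$ and expand it directly, while the paper expands $(\sqrt{3}/2)^s$ and uses the pole a second time. Both produce the same extra $-\frac{\pi}{y}\log y$ contribution and the same constant.
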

\begin{proof}
We use $N(r)$ to denote the number of nonzero lattice points in a closed disk of radius $r$, that is $N(r) = \#\{k \in \Lambda \setminus \{0\} : 0< \|k\| \le r\}.$
We bound this using the standard argument coming from the basic estimates for the Gauss circle problem: the area of the fundamental cell is $\sqrt{3}/2$ showing that
$$ N(r) = \frac{r^2 \pi}{\sqrt{3}/2} + \mathcal{O}(r) = \frac{2 \pi}{\sqrt{3}} r^2 + E(r),$$
where the error term $E(r) = \mathcal{O}(r)$ comes from the boundary. Rewriting the sum as a Riemann-Stieltjes integral and using integration by parts, we get
$$ \sum_{k \in \Lambda \atop 0 < \|k\| < X} \frac{1}{\|k\|^2} = \int_{1}^X \frac{1}{r^2} \, dN(r) = \frac{N(X)}{X^2} + 2 \int_1^X \frac{N(r)}{r^3} \, dr.$$
Using the asymptotics for $N(r)$, we get
\begin{align*}
\sum_{k \in \Lambda \atop 0 < \|k\| < X} \frac{1}{\|k\|^2} &= \frac{2 \pi}{\sqrt{3}} + \mathcal{O}(X^{-1}) + 2 \int_1^X \frac{2 \pi}{\sqrt{3}} \frac{1}{r}dr + 2\int_1^X \frac{E(r)}{r^3} dr \\
&= \frac{4 \pi}{\sqrt{3}} \log{X} + \frac{2\pi}{\sqrt{3}} + 2\int_1^X \frac{E(r)}{r^3} dr.
\end{align*}
  Since $E(r) = \mathcal{O}(r)$, we have
  $$  \int_1^X \frac{E(r)}{r^3} dr =  \int_1^{\infty} \frac{E(r)}{r^3} dr -  \int_X^{\infty} \frac{E(r)}{r^3} dr =   \int_1^{\infty} \frac{E(r)}{r^3} dr  + \mathcal{O}(X^{-1}).$$
Altogether, we arrive at
$$ \sum_{k \in \Lambda \atop 0 < \|k\| < X} \frac{1}{\|k\|^2} = \frac{4 \pi}{\sqrt{3}} \log{X} + \frac{2\pi}{\sqrt{3}} + 2\int_1^{\infty} \frac{E(r)}{r^3} dr + \mathcal{O}(X^{-1}). \qquad (\diamond)$$
This is the desired asymptotic expansion.  It remains to identify the constant in question.
We use that the sum is very nearly convergent. Indeed, for any $s > 1$,
$$ \sum_{k \in \Lambda \atop 0 < \|k\| < X} \frac{1}{\|k\|^{2s}} < \infty.$$
Arguing in exactly the same way, we see that, for $s>1$ and $X \rightarrow \infty$,
\begin{align*}
 \sum_{k \in \Lambda \atop k \neq 0} \frac{1}{\|k\|^{2s}} &=  2s \int_1^{\infty} \frac{N(r)}{r^{2s+1}} dr = 2s \int_1^{\infty} \frac{1}{r^{2s+1}}\left( \frac{2 \pi}{\sqrt{3}} r^2 + E(r) \right) dr \\
 &=  \frac{2 \pi}{\sqrt{3}} \frac{1}{s-1} + \frac{2\pi}{\sqrt{3}}  + 2s \int_1^{\infty} \frac{E(r)}{r^{2s + 1}} dr. \qquad \qquad (\diamond \diamond)
 \end{align*}
The last two terms converge exactly to the constant in question when $s \rightarrow 1^+$. We now expand the expression on the left using the first Kronecker limit formula (see, for example, Lang \cite[Chapter 20]{lang}). If, for $\tau \in \mathbb{C}$ and $s > 1$,
$$ E(\tau, s) = \sum_{(m,n) \in \mathbb{Z}^2 \atop (m,n) \neq (0,0)} \frac{(\Im \tau)^s}{\| m \tau + n\|^{2s}}$$
then there is an asymptotic expansion as $s \rightarrow 1^+$ given by
$$ E(\tau, s) = \frac{\pi}{s-1} + 2\pi \gamma - 2\pi \log{2} - 2\pi \log\left( \sqrt{\Im (\tau)} |\eta(\tau)|^2 \right) + \mathcal{O}(s-1),$$
where $\eta$ is the Dedekind eta function. The hexagonal lattice is generated by $(1,0)$ and  $\tau = e^{\pi i/3}$ whose imaginary part is $\Im \tau = \sqrt{3}/2$. We need a version of the formula without the $(\Im \tau)^s$ term and use that, for $s \rightarrow 1^+$
$$ (\Im \tau)^s = \left( \frac{\sqrt{3}}{2}\right)^s =  \frac{\sqrt{3}}{2} - \frac{\sqrt{3}}{4} \log\left(\frac{4}{3}\right) (s-1) + \mathcal{O}((s-1)^2)$$
to write
\begin{align*}
 \sum_{(m,n) \in \mathbb{Z}^2 \atop (m,n) \neq (0,0)} \frac{(\sqrt{3}/2)^s}{\| m \tau + n\|^{2s}} &= \sum_{(m,n) \in \mathbb{Z}^2 \atop (m,n)  \neq (0,0)} \frac{\sqrt{3}/2}{\| m \tau + n\|^{2s}} + \mathcal{O}(s-1) \\
 &- \frac{\sqrt{3}}{4} \log\left(\frac{4}{3}\right) (s-1)  \sum_{(m,n) \in \mathbb{Z}^2 \atop (m,n) \neq (0,0)} \frac{1}{\| m \tau + n\|^{2s}}.
\end{align*}
The last term can be simplified a bit further.
Since, ignoring lower order terms, one more application of the Kronecker Limit Formula implies that
$$ \sum_{(m,n) \in \mathbb{Z}^2 \atop (m,n) \neq (0,0)} \frac{1}{\| m \tau + n\|^{2s}} = \frac{2}{\sqrt{3}} \sum_{(m,n) \in \mathbb{Z}^2 \atop (m,n) \neq (0,0)} \frac{\sqrt{3}/2}{\| m \tau + n\|^{2s}} + \mathcal{O}(s-1) = \frac{2}{\sqrt{3}} \frac{\pi}{s-1} + \mathcal{O}(1),$$
we arrive at
$$ \frac{\sqrt{3}}{4} \log\left(\frac{4}{3}\right) (s-1)  \sum_{(m,n) \in \mathbb{Z}^2 \atop (m,n) \neq (0,0)} \frac{1}{\| m \tau + n\|^{2s}} = \frac{\pi}{2} \log\left(\frac{4}{3}\right) + \mathcal{O}(s-1)
 $$
 and can simplify the previous expression as
 \begin{align*}
 \sum_{(m,n) \in \mathbb{Z}^2 \atop (m,n) \neq (0,0)} \frac{(\sqrt{3}/2)^s}{\| m \tau + n\|^{2s}} &= \sum_{(m,n) \in \mathbb{Z}^2 \atop (m,n)  \neq (0,0)} \frac{\sqrt{3}/2}{\| m \tau + n\|^{2s}}  - \frac{\pi}{2}\log\left(\frac{4}{3}\right) + \mathcal{O}(s-1).  
\end{align*}
 
Altogether, we obtain
\begin{align*}
 \sum_{k \in \Lambda \atop \|k\| \neq 0} \frac{1}{\|k\|^{2s}} &= \frac{2}{\sqrt{3}} \sum_{(m,n) \in \mathbb{Z}^2 \atop (m,n) \neq (0,0)} \frac{(\Im \tau)^s}{\| m \tau + n\|^{2s}} + \frac{\pi}{\sqrt{3}}\log\left(\frac{4}{3}\right) + \mathcal{O}(s-1) \\
 &= \frac{2 \pi}{\sqrt{3}} \frac{1}{s-1} + \frac{4\pi}{\sqrt{3}} \gamma - \frac{4\pi}{\sqrt{3}} \log{2} + \frac{\pi}{\sqrt{3}}\log\left(\frac{4}{3}\right) \\
 &- \frac{4\pi}{\sqrt{3}} \log\left( \frac{3^{1/4}}{\sqrt{2}}   |\eta(e^{\pi i/3})|^2 \right) + \mathcal{O}(s-1).
\end{align*}
Since, again for $s \rightarrow 1^+$,
$$ 2s \int_1^{\infty} \frac{E(r)}{r^{2s + 1}} dr = 2 \int_1^{\infty} \frac{E(r)}{r^{3}} dr + \mathcal{O}(s-1),$$
we may compare this asymptotic expansion to $(\diamond \diamond)$ and deduce
$$ \frac{2\pi}{\sqrt{3}}  + 2 \int_1^{\infty} \frac{E(r)}{r^{3}} dr + \mathcal{O}(s-1) = \frac{4\pi}{\sqrt{3}} \log\left( \frac{e^{\gamma}}{\sqrt{3} |\eta(e^{\pi i/3})|^2} \right).$$
Plugging this back into $(\diamond)$ gives the desired expansion.
\end{proof}

The argument only used the `trivial' lattice point counting estimate
$$\#\{k \in \Lambda \setminus \{0\} : 0< \|k\| \le r\} = \frac{2 \pi}{\sqrt{3}} r^2 + \mathcal{O}(r).$$
Refined estimates are available and would lead to a smaller error term; for the sake of keeping the argument as self-contained as possible, we have not pursued this.

\section{Local contributions: the near field} 
We write the sum as
$$ \sum_{i=2}^{n} \frac{1}{\|x_1 - x_i\|^2} = \sum_{i=2 \atop \|x_1 - x_i\| \leq \varepsilon}^{n} \frac{1}{\|x_1 - x_i\|^2} +
 \sum_{i=2 \atop \|x_1 - x_i\| \geq \varepsilon}^{n} \frac{1}{\|x_1 - x_i\|^2}$$
 and we refer to the first term as the `local' (or: near-field) contribution and the second term as the `global' (or: far-field) contribution. As usual, these two regimes require two very different techniques, \S 4 deals with the near field while \S 5 will deal with the far field. We may assume without loss of generality that $x_1 = (0,0,1)$ is in the north pole of the unit sphere and that we are summing over all other points that are contained in a spherical cap of size $\varepsilon$ (which will later be chosen to be $\varepsilon = n^{-1/10}$). The main idea is simple: projecting these nearby points to the tangent plane going through $x_1 = (0,0,1)$ decreases distances; the projected points should look approximately like the hexagonal lattice and we should be able to apply the main Lemma.  It remains to control the error terms. Let $y_1, \dots, y_{\ell}$ denote the subset of all the points $x_2, \dots, x_n$ contained in the spherical cap and let $z_1, \dots, z_{\ell}$ denote their orthogonal projections onto the tangent plane going through the north pole. Then
 $$ \|x_1 - z_i\| \leq \|x_1 - y_i\| \qquad \mbox{and} \qquad \sum_{i=1}^{\ell} \frac{1}{\|x_1 - y_i\|^2} \leq  \sum_{i=1}^{\ell} \frac{1}{\|x_1 - z_i\|^2}.$$
Assuming $\varepsilon = o(1)$, the projections $z_i$ are still, to first order, very well approximated by hexagonal lattices. We will bound the sum from above by using a smaller hexagonal mesh. The points on the sphere have minimal separation 
$$ \min_{i \neq j} \|x_i - x_j\| \geq   \left( \frac{8 \pi}{\sqrt{3}} \right)^{1/2} \frac{1}{\sqrt{n}} - \mathcal{O}(n^{-2/3}).$$
The maximal shrinking of distances of projections occurs at distance $\sim \varepsilon$ from the north pole, the relevant factor being
$$ \frac{1}{\sqrt{1 + f'(\varepsilon)^2}} \qquad \mbox{where} \qquad f(x) = \sqrt{1-x^2}.$$
This factor is $1 - \varepsilon^2/2 - \mathcal{O}(\varepsilon^4) \geq 1- \varepsilon^2$. 
Thus
$$ \min_{i \neq j} \|z_i - z_j\| \geq  (1-\varepsilon^2) \left[  \left( \frac{8 \pi}{\sqrt{3}} \right)^{1/2} \frac{1}{\sqrt{n}} - \mathcal{O}(n^{-2/3}) \right].$$
This allows us to compare the sum of inverse distances from $x_1$ to the projected points to the sum of the inverse distances of a rescaling of the standard unit hexagonal lattice $\Lambda$ in $\mathbb{R}^2$ by a factor of
 \begin{align*}
  \lambda &= (1- \varepsilon^2)\left[ \left( \frac{8 \pi}{\sqrt{3}} \right)^{1/2} \frac{1}{\sqrt{n}} -  \mathcal{O}(n^{-2/3})\right] \\
  &= \left( \frac{8 \pi}{\sqrt{3}} \right)^{1/2} \frac{1}{\sqrt{n}} - \mathcal{O}(n^{-2/3}) - \mathcal{O}(\varepsilon^2 n^{-1/2}).
  \end{align*}
Using this estimate, we arrive at
  \begin{align*}
  \sum_{i=1}^{\ell} \frac{1}{\|x_1 - z_i\|^2} &\leq  \sum_{k \in \Lambda \atop 0 < \|  k\| \leq \frac{\varepsilon}{\lambda}} \frac{1}{\|\lambda k\|^2} = \frac{1}{\lambda^2}   \sum_{k \in \Lambda \atop 0 < \|  k\| \leq \frac{\varepsilon}{\lambda}} \frac{1}{\| k\|^2}.
  \end{align*}
  
  \begin{center}
\begin{figure}[h!]
\begin{tikzpicture}
\node at (0,0) {\includegraphics[width=\textwidth]{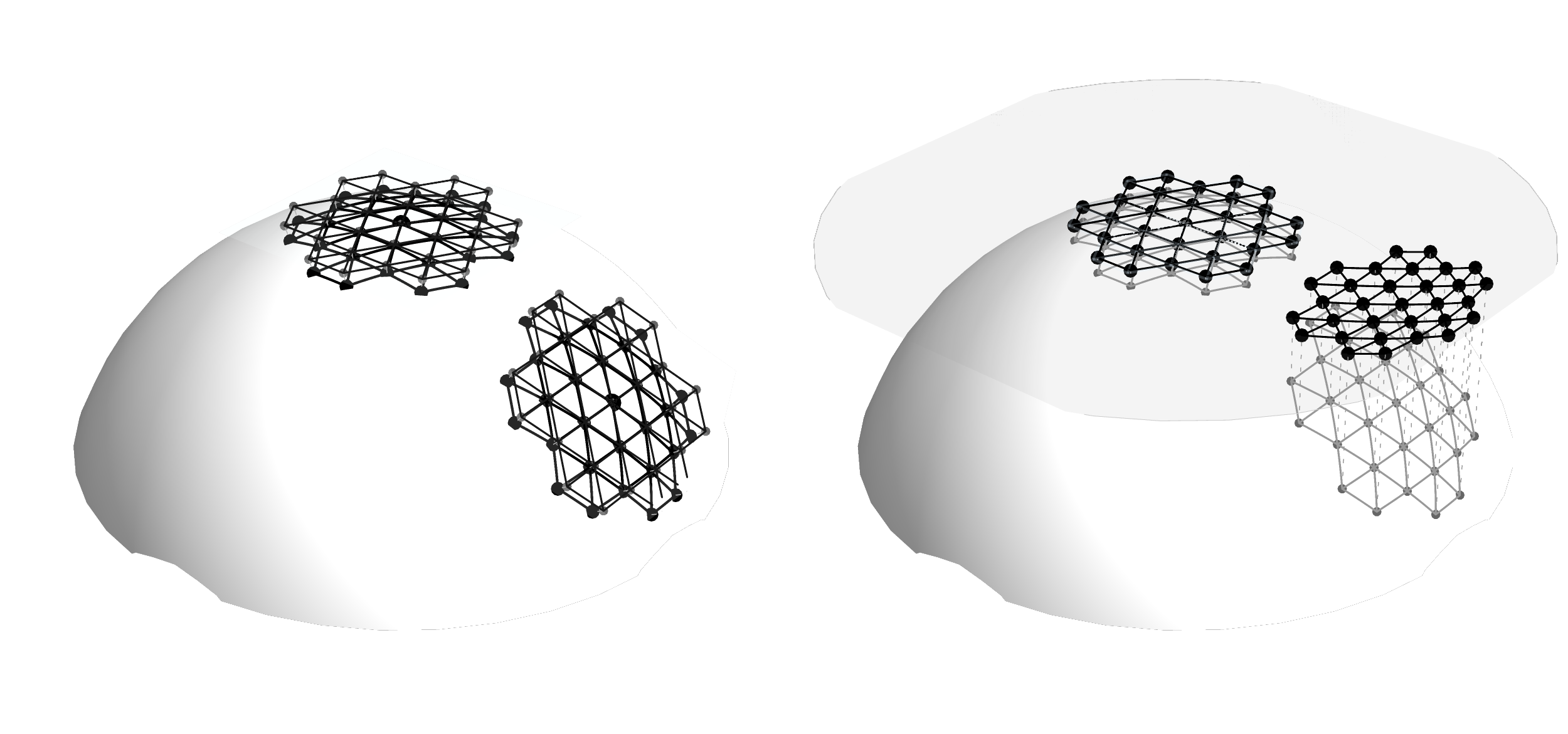}};
\end{tikzpicture}
\vspace{-20pt}
\caption{The hexagonal patches are constructed locally; when projecting them with respect to a far-away tangent plane, distortions will occur that move points closer to each other.}
\end{figure}
\end{center}
  \vspace{-20pt}
  
    We quickly note that slightly more refined estimates are possible: the distortion is smaller for points that are closer to the north pole which is also where the summands $1/\|k\|^2$ are larger; however, as mentioned above, we are not optimizing the proof for the smallest possible error term (which, if pursued as a main goal, would arguably also require a more precise analysis of the placement of the spherical quadrilaterals). We continue with an analysis of the sum. Note that
  $$ \frac{1}{\lambda^2} = \frac{\sqrt{3}}{8\pi} n + \mathcal{O}(n^{5/6}) + \mathcal{O}( \varepsilon^2 n ).$$
  As for the sum, we use the Lemma to deduce that, provided $\varepsilon \gg n^{-1/2}$,
  \begin{align*}
   \sum_{k \in \Lambda \atop 0 < \|  k\| \leq \frac{\varepsilon}{\lambda}} \frac{1}{\| k\|^2} &=  \frac{4\pi}{\sqrt{3}} \log\left( \frac{e^{\gamma}}{\sqrt{3} |\eta(e^{\pi i/3})|^2} \frac{\varepsilon}{\lambda} \right) + \mathcal{O}(\varepsilon^{-1} n^{-1/2}).
  \end{align*}
We have
  $$ \log \left( \frac{1}{\lambda}\right) =  \log \left( \frac{ 3^{1/4} \sqrt{n}}{ \sqrt{8 \pi}}\right) + \mathcal{O}(n^{-1/6}) + \mathcal{O}(\varepsilon^2),$$
  we can also write this as, assuming $\varepsilon \gg n^{-1/2}$,
    \begin{align*}
   \sum_{k \in \Lambda \atop 0 < \|  k\| \leq \frac{\varepsilon}{\lambda}} \frac{1}{\| k\|^2} &=  \frac{4\pi}{\sqrt{3}} \log\left(\varepsilon \frac{e^{\gamma}}{\sqrt{3} |\eta(e^{\pi i/3})|^2}  \frac{ 3^{1/4} \sqrt{n}}{ \sqrt{8 \pi}}\right) \\
   &+ \mathcal{O}(n^{-1/6} + \varepsilon^2 + \varepsilon^{-1} n^{-1/2}).
  \end{align*} 
  and multiplying this with $1/\lambda^2$, we arrive at the estimate
  \begin{align*}
  \sum_{i=2 \atop \|x_1 - x_i\| \leq \varepsilon}^{n} \frac{1}{\|x_1 - x_i\|^2} &\leq   \frac{n}{2} \log\left(\varepsilon \frac{e^{\gamma}}{\sqrt{3} |\eta(e^{\pi i/3})|^2}  \frac{ 3^{1/4} \sqrt{n}}{ \sqrt{8 \pi}}\right) \\
  &+ \mathcal{O}(n^{5/6} \log{n} +  \varepsilon^2 n \log{n} + \varepsilon^{-1} n^{1/2}).
  \end{align*}

\section{Global contributions: the far field} 
The points have nearly optimal separation in the sense of Habicht and van der Waerden; this means that we can associate to each point a `fundamental' hexagonal cell whose smallest area is very close to $4\pi/n$ (up to a lower order term). Since we are far away from the singularity, the point evaluation $\|x_1 - x_i\|^{-2}$ does not vary too much if $x_i$ is replaced by another point inside the hexagonal cell associated to $x_i$. This allows us to bound the sum by the integral and suggests that
$$  \sum_{i=2 \atop \|x_1 - x_i\| \geq \varepsilon}^{n} \frac{1}{\|x_1 - x_i\|^2} \sim  \frac{n}{4\pi} \int_{\mathbb{S}^2} \frac{1_{\|x_1-y\| \geq \varepsilon}}{\|x_1-y\|^2} dy.$$
This will now be made precise. The integral has a simple closed form: assuming $x_1$ to be in the north pole and using spherical coordinates, we have
$$ \| x- y\|^2 = 2 - 2 \cos{\theta}.$$
The condition $\|x - y\| \geq \varepsilon$ then leads to the cutoff $1- \cos{\theta_{\varepsilon}} = \varepsilon^2/2$ and, using spherical coordinates, the
area element $\sin{\theta} d\theta$ and the substitution $u = 1 - \cos{\theta}$
\begin{align*}
 \int_{\mathbb{S}^2} \frac{1_{\|x_1-y\| \geq \varepsilon}}{\|x_1-y\|^2} dy  = \int_0^{2\pi} \int_{\theta_{\varepsilon}}^{\pi} \frac{\sin{\theta}}{2 - 2 \cos{\theta}} d\theta = \pi \int_{\varepsilon^2/2}^{2} \frac{du}{u} = 2 \pi \log\left(\frac{2}{\varepsilon} \right).
\end{align*}
It remains to understand the induced error. 
We use the fact that we can associate to each point $x_i \in \mathbb{S}^2$ a hexagonal cell $H(x_i) \subset \mathbb{S}^2$ with width at least
$$ \delta =  \min_{i \neq j} \|x_i - x_j\| \geq   \left( \frac{8 \pi}{\sqrt{3}} \right)^{1/2} \frac{1}{\sqrt{n}} - \mathcal{O}(n^{-2/3}).$$
If everything was `perfectly equi-spaced', then each such cell would have area $4 \pi /n$. However, due to the correction factor of size $1 + \mathcal{O}(n^{-1/6})$,
the area may be distorted by up to $(1 + \mathcal{O}(n^{-1/6}))^2 = 1 + \mathcal{O}(n^{-1/6})$. Each such cell has at least area 
$$ |H(x_i)| \geq (1 + \mathcal{O}(n^{-1/6})) \cdot \frac{4 \pi}{n}.$$
Any such fundamental cell has size $\sim n^{-1/2} \times n^{-1/2}$. The function $\|x_1-y\|^{-2}$ can vary by at most $n^{-1/2}/\varepsilon^3$ over each cell. Therefore
\begin{align*}
\frac{1}{\|x_1- x_i\|^2}  &= \frac{1}{|H(x_i)|} \int_{H(x_i)}  \frac{1}{\|x_1- x_i\|^2} dy \\
&\leq  \frac{1}{|H(x_i)|} \int_{H(x_i)}  \left( \frac{1}{\|x_1- y\|^2} + \mathcal{O}(n^{-1/2} \varepsilon^{-3})\right) ~dy\\
&=  \mathcal{O}(n^{-1/2} \varepsilon^{-3})  + \frac{1}{|H(x_i)|} \int_{H(x_i)}  \frac{1}{\|x_1- y\|^2} ~dy\\
&\leq  \mathcal{O}(n^{-1/2} \varepsilon^{-3})  + \left(\frac{n}{4\pi} + \mathcal{O}(n^{5/6}) \right) \int_{H(x_i)}  \frac{1}{\|x_1- y\|^2} ~dy.
\end{align*}
Summing over at most $n$ terms gives 
\begin{align*}
 \sum_{i=2 \atop \|x_1 - x_i\| \geq \varepsilon}^{n} \frac{1}{\|x_1 - x_i\|^2} &\leq  \frac{n}{4\pi}  \int_{\mathbb{S}^2} \frac{1_{\|x-y\| \geq \varepsilon}}{\|x-y\|^2} dy + \mathcal{O}(n^{1/2} \varepsilon^{-3}) + \mathcal{O}(n^{5/6} \log(2/\varepsilon)) \\
 &= \frac{n}{2} \log\left( \frac{2}{\varepsilon} \right) + \mathcal{O}(n^{1/2} \varepsilon^{-3}) + \mathcal{O}(n^{5/6} \log(2/\varepsilon)).
 \end{align*}
Once again, the estimate is pessimistic: the error term $n^{-1/2} \varepsilon^{-3}$ is only accurate for points $x_i$ satisfying
 distance $\|x_1- x_i\| \sim \varepsilon$. Most points are further away.
 
 \section{Finishing the proof}
 Collecting all the terms, we obtain
 \begin{align*}
 \sum_{i=2}^{n} \frac{1}{\|x_1 - x_i\|^2} &\leq   \frac{n}{2} \log\left(\varepsilon \frac{e^{\gamma}}{\sqrt{3} |\eta(e^{\pi i/3})|^2}  \frac{ 3^{1/4} \sqrt{n}}{ \sqrt{8 \pi}}\right) \\
  &+ \mathcal{O}(n^{5/6} \log{n} + n \varepsilon^2 \log{n} + \varepsilon^{-1} n^{1/2}) \\
  &+ \frac{n}{2} \log\left( \frac{2}{\varepsilon} \right) + \mathcal{O}(n^{1/2} \varepsilon^{-3}) + \mathcal{O}(n^{5/6} \log(2/\varepsilon)).
 \end{align*}
 Extracting the $\sqrt{n}$ term inside the logarithm and combining the remaining two terms and keeping track of the dominant error terms, we arrive at the expression
  \begin{align*}
 \sum_{i=2}^{n} \frac{1}{\|x_1 - x_i\|^2} &\leq \frac{n \log{n}}{4} +    \log\left(  \frac{1}{3^{1/8}} \frac{ 1}{ (2 \pi)^{1/4}} \frac{e^{\gamma/2}}{  |\eta(e^{\pi i/3})|}   \right) n \\
  &  + \mathcal{O}(n^{5/6} \log{n} + \varepsilon^2 n \log{n} + n^{1/2} \varepsilon^{-3}+ n^{5/6} \log(2/\varepsilon)).
 \end{align*}
 We simplify the constant by appealing to the Chowla-Selberg formula \cite{chowla, chowla2} 
 $$  |\eta(e^{\pi i/3})| = \frac{3^{1/8}}{2\pi} \Gamma \left( \frac{1}{3} \right)^{3/2}.$$
Setting $\varepsilon = n^{-1/10}$ results in the error term $\mathcal{O}(n^{5/6} \log{n})$.

\end{document}